\documentclass[12pt]{amsart}
\usepackage[colorlinks=true,pagebackref,hyperindex,citecolor=blue,linkcolor=red]{hyperref}
\usepackage{amsmath}
\usepackage{amsfonts}
\usepackage{amssymb}
\usepackage{setspace}
\usepackage{moreverb}
\usepackage[dvips]{graphicx} 
\usepackage[latin1]{inputenc}
\usepackage[T1]{fontenc}
\usepackage{amssymb}
\usepackage{tikz}
\usepackage{color}
\usepackage[all]{xy}
\usepackage{xfrac}
\usepackage[top=1in, bottom=1in, left=1in, right=1in]{geometry}
\usepackage{mathrsfs}

\usetikzlibrary{shapes}

\newtheorem{theoremx}{Theorem}

\newtheorem{theorem}{Theorem}[section]

\newtheorem{corollary}[theorem]{Corollary}
\newtheorem{lemma}[theorem]{Lemma}
\newtheorem{proposition}[theorem]{Proposition}

\theoremstyle{definition}
\newtheorem{definition}[theorem]{Definition}

\newtheorem{notation}[theorem]{Notation}

\newtheorem{remark}[theorem]{Remark}
\numberwithin{equation}{subsection}

\newcommand{\NN}{\mathbb{N}}

\newcommand{\ZZ}{\mathbb{Z}}

\newcommand{\FF}{\mathbb{F}}
\newcommand{\RR}{\mathbb{R}}

\newcommand{\cC}{\mathcal{C}}

\newcommand{\m}{\mathfrak{m}}
\newcommand{\fa}{\mathfrak{a}}

\newcommand{\fp}{\mathfrak{p}}

\newcommand{\End}{\operatorname{End}}

\newcommand{\Min}{\operatorname{Min}}

\allowdisplaybreaks

\begin{document}

\title{On $F$-thresholds of differential power filtrations}

\author[W. Badilla-C\'espedes]{W\'agner Badilla-C\'espedes}
\email{wagner.badillacespedes@gmail.com}


\subjclass[2020]{Primary 13A35, 13N10, 16S32; Secondary 14B05.}
\keywords{$F$-thresholds, differential powers, filtration of ideals.}

\maketitle

\begin{abstract}
We study the $F$-threshold of differential power filtrations of monomial ideals.  We give the existence of this number and show that it is upper bounded by the maximum height of the minimal primes of the underlying monomial ideal. As an application of our results, we show that the dimension of the polynomial ring and the $F$-threshold of the differential power filtration of the monomial maximal ideal are the same. 
\end{abstract}
\section{Introduction}
The study of numerical invariants in prime characteristic has become a central topic in commutative algebra and singularity theory. Among these invariants, $F$-thresholds play a prominent role, since they measure the behavior of singularities. These numbers are obtained by comparing ordinary powers versus Frobenius powers. Given two ideals $I,J\subseteq R$ such that $I\subseteq \sqrt{J}$, the $F$-threshold of $I$ with respect to $J$ \cite{MTW2005,HMTW2008,DSNBP2018} is defined by $$c^J(I)=\lim\limits_{e\to\infty} \frac{\nu^J_I(p^e)}{p^e},$$ where  $\nu^J_I(p^e)=\max\{ t \in \NN | I^t \not\subseteq J^{[p^e]}\}$. These numbers still give information on $I,J$ and $R$. For instance, one can use $F$-thresholds to study integral closure, tight closure,  Hilbert-Samuel multiplicities \cite{HMTW2008},
 and $a$-invariants \cite{DSNB2018,DSNBP2018}. However, it is acknowledged by the community that the task of computing $c^J(I)$ is hard in general and few examples of explicit calculations are known, among them \cite{HDbinomial,HDpolynomials,Mu18,Tri,GVJVNB,SmVra,BCLC2025}. Recently, Koley and Kumar extended the notion of $F$-thresholds to filtrations of ideals, generalizing the classical concept \cite{KK}. They established the existence of $F$-thresholds for several natural filtrations, including symbolic power filtrations, and gave effective upper bounds.

 On the other hand, differential powers were introduced by Dao, De Stefani, Grifo, Huneke, and Núñez-Betancourt \cite{DDSGHNB2018}. These ideals have been of significant interest to study their relations with symbolic powers and to give versions of the Zariski-Nagata Theorem in which both concepts are involved \cite{DDSGHNB2018,DSGJ2020}. For prime ideals in polynomial rings over a perfect field, the differential powers coincide with symbolic powers, by the Zariski-Nagata Theorem \cite{Zariski1949,Nagata1975} (see also \cite[Proposition 2.14]{DDSGHNB2018}). However, if we do not assume perfect fields, then the Zariski?Nagata Theorem may fail. For instance, it follows that $Q^{(2)}\not= Q^{\langle 2 \rangle}$ when $Q=(x^p-t) \subseteq \FF_p(t)[x]$ \cite[Example 3.8]{DSGJ2020}. This motivates the study of differential powers in rings of positive characteristic.

Motivated by the connections between differential powers and symbolic powers, together with the work of Koley and Kumar \cite{KK} on the $F$-threshold of symbolic power filtrations of ideals, in this article we study $F$-thresholds of differential power filtrations of monomial ideals. Specifically, we establish the existence of the $F$-thresholds for such filtrations and provide an explicit upper bound in terms of the height of the minimal primes of the underlying monomial ideal. 
\begin{theoremx}[{see Theorem \ref{theo:F-threshold of filtrations of differential powers}}] \label{Main-theo}
Let $I \subseteq J \subseteq R$ be two nonzero proper ideals, with $I$ monomial and $J$ radical. Then $\cC^{J}(I^{\langle \bullet \rangle})$ exists and $\cC^{J}(I^{\langle \bullet \rangle}) \leq \max\{\operatorname{ht}(\fp)\;|\;\fp \in \Min(I)\}$. 
\end{theoremx}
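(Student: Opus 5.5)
We work in $R=K[x_1,\dots,x_n]$ of characteristic $p>0$. The $F$-threshold of a filtration is defined as in Koley--Kumar: for a filtration $\{I_t\}$ we set
$$\nu(p^e)=\max\{t : I_t\not\subseteq J^{[p^e]}\},\qquad \cC^J(I_\bullet)=\lim_{e\to\infty}\frac{\nu(p^e)}{p^e}.$$

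\textbf{Step 1: reduce differential powers to symbolic powers.} For a monomial ideal $I$ with minimal primes $\fp_1,\dots,\fp_s$, each $\fp_i$ is generated by variables. For primes, $\fp^{\langle n\rangle}=\fp^{(n)}=\fp^n$; this is Zariski--Nagata in the monomial prime case, and it holds over any field because the differential operators $\partial^{\alpha}/\alpha!$ are available. Differential powers commute with intersections, so $I^{\langle n\rangle}\subseteq \sqrt I^{\langle n\rangle}=\bigcap_i \fp_i^{\,n}$. It is also useful to record $I^{\langle n\rangle}\supseteq I^{n}$ when $I$ is radical.

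\textbf{Step 2: existence.} The sequence $a_e=\nu(p^e)$ should satisfy $a_{e+1}\ge p\,a_e$, giving monotonicity of $a_e/p^e$. The argument: if $f\in I^{\langle t\rangle}\setminus J^{[p^e]}$, then $f^p\notin J^{[p^{e+1}]}$ by flatness of Frobenius. Moreover $f^p\in I^{\langle pt\rangle}$, and for monomial $I$ one can instead check this on a monomial $f$, since differential powers of monomial ideals are monomial: a monomial $x^a$ lies in $I^{\langle t\rangle}$ iff every $\partial^{\beta}$ with $|\beta|\le t-1$ maps it into $I$. Hence $\nu(p^{e+1})\ge p\,\nu(p^e)$. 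Combined with the upper bound of Step 3, the nondecreasing bounded sequence $a_e/p^e$ converges. For this to work one must confirm that $\nu(p^e)$ is finite, which holds since $I^{\langle t\rangle}\subseteq \fp^t$ and Step 3 shows this lands in $J^{[p^e]}$ for large $t$.

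\textbf{Step 3: upper bound.} Let $h=\max \operatorname{ht}\fp_i$. I claim that if $t\ge h(p^e-1)+1$, then $\bigcap_i\fp_i^{\,t}\subseteq J^{[p^e]}$. Since the intersection is monomial, take a monomial $m$ in it. Since $J\supseteq I$ is radical, $J\supseteq \sqrt I=\bigcap\fp_i$, and it suffices to show $m\in(\bigcap_i \fp_i)^{[p^e]}$. Here is where we use that $J\supseteq\sqrt I$ radical: $(\sqrt I)^{[p^e]}\subseteq J^{[p^e]}$.

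Now $\bigcap\fp_i$ is the squarefree monomial ideal generated by monomials $x_S$ with $S$ meeting each support $S_i$ of $\fp_i$. Let $m=x^a$ and $T=\{j: a_j\ge p^e\}$. For each $i$ we have $\sum_{j\in S_i}a_j\ge t> h(p^e-1)\ge |S_i|(p^e-1)$, so some $j\in S_i$ has $a_j\ge p^e$. Thus $T$ meets every $S_i$, and $x_T^{\,p^e}$ divides $m$, so $m\in(\sqrt I)^{[p^e]}$.

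Therefore $\nu(p^e)\le h(p^e-1)$, and $\cC^J(I^{\langle\bullet\rangle})\le h$.

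\textbf{Main obstacle.} The delicate point is Step 2: verifying $(I^{\langle t\rangle})^{[p]}\subseteq I^{\langle pt\rangle}$, or $f^p\in I^{\langle pt\rangle}$, for non-radical monomial $I$, where differential powers are not simply intersections of prime powers. I would prove it by the monomial criterion, using that $\partial^\beta(x^{pa})$ vanishes unless $p\mid\beta$ coordinatewise, reducing to $\partial^{\beta/p}$ applied to $x^a$ together with Lucas-type identities for the operators $\partial^\beta/\beta!$. If that fails, I would fall back on proving existence via the $\limsup/\liminf$ comparison with the symbolic power filtration of $\sqrt I$, using Koley--Kumar.
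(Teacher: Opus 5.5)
Your argument is correct, and its skeleton matches the paper's. You bound $\nu(p^e)$ using $\fp^{\langle n\rangle}=\fp^n$ for monomial primes and a pigeonhole containment of ordinary powers in Frobenius powers. You then get existence from $p\,\nu(p^e)\le\nu(p^{e+1})$ together with boundedness. The differences lie in two ingredients.

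\emph{Monotonicity.} The paper proves that $I^{\langle\bullet\rangle}$ is a filtration for every ideal of $R$ (Proposition~\ref{Prop:differential-filtration-polinomial}, via the Hasse--Schmidt Leibniz rule). It then quotes Koley--Kumar (Proposition~\ref{Prop: filtration-F-puro}), so $(I^{\langle t\rangle})^{p}\subseteq I^{\langle pt\rangle}$ comes for free. Your Lucas-type route is a valid self-contained substitute, and the ``main obstacle'' you flag is not really one. Write $\delta_\beta=\partial^\beta/\beta!$. For every $f\in R$ one has $\delta_\beta(f^p)=0$ unless $\beta=p\gamma$, and $\delta_{p\gamma}(f^p)=(\delta_\gamma f)^p$. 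So $|\beta|\le pt-1$ forces $|\gamma|\le t-1$, and hence $\delta_\beta(f^p)\in I$ for an arbitrary ideal $I$. There is no need to pass to monomial $f$ or to worry about $I$ being non-radical. Your stated fallback would not suffice on its own: comparing with another filtration bounds $\cC^J_{+}$ and $\cC^J_{-}$ but does not make them equal. Existence needs the monotonicity for $I^{\langle\bullet\rangle}$ itself, which is exactly the step you were trying to avoid.

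\emph{Upper bound.} The paper first reduces to radical $I$ via Remark~\ref{remark-inequality}. It then uses $\fp^{\mu(\fp)p^e}\subseteq\fp^{[p^e]}$ and $\bigcap_i\fp_i^{[p^e]}\subseteq(\bigcap_i\fp_i)^{[p^e]}$, obtaining $\nu(p^e)\le hp^e-1$. You skip the reduction by using $I^{\langle t\rangle}\subseteq\bigcap_i\fp_i^{\,t}$ directly and running the pigeonhole on monomials of the intersection. This gives the slightly sharper $\nu(p^e)\le h(p^e-1)$ and the same limiting bound $h$.

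Finally, justify $\fp^{\langle n\rangle}\subseteq\fp^n$ by the direct operator argument of Lemma~\ref{lemma:monomial-prime}(1), not by invoking Zariski--Nagata. Zariski--Nagata can fail over imperfect fields.
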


The previous result places differential power filtrations within the framework of $F$-thresholds, connecting differential operator techniques with invariants in prime characteristic. As a consequence, we recover the dimension of the ambient polynomial ring as the $F$-threshold of the differential power filtration of the monomial maximal ideal (see Corollary \ref{Coro:differential-filtration}).
\section{$F$-Thresholds of 
Filtrations of Ideals}
Throughout this section, $R$ denotes a ring of prime characteristic $p>0$. We introduce the definition and basic properties of the $F$-threshold of
a filtration of ideals. This is a recent invariant of singularities introduced by Koley and Kumar \cite{KK}, which extends the classical notion of the $F$-threshold of an ideal.

We begin by recalling the definition of a filtration of ideals. A filtration of ideals in $R$ is a sequence of ideals $\{\fa_i\}_{i \geq 0}$ satisfying the following conditions:
\begin{enumerate}
    \item $\fa_0=R$,
    \item $\fa_{i+1} \subseteq \fa_i$ for every $i \geq 0$,
    \item $\fa_i \fa_j \subseteq \fa_{i+j}$ for every $i,j\geq 0$.
\end{enumerate}
We denote the sequence $\{\fa_i\}_{i \geq 0}$ by $\fa_{\bullet}$. From the definition it follows that if $\fa_{\bullet}=\{\fa_i\}_{i \geq 0}$ is a filtration of ideals, then $(\sqrt{\fa})_{\bullet}=\{\sqrt{\fa_i}\}_{i \geq 0}$ is also a filtration of ideals. Let $ \mathfrak{a}_\bullet = \{ \mathfrak{a}_i \}_{i \geq 0} $ and $ \mathfrak{b}_\bullet = \{ \mathfrak{b}_i \}_{i \geq 0} $ be two filtrations in $R$. We write $\fa_{\bullet} \leq \mathfrak{b}_{\bullet}$ if $\fa_i \subseteq \mathfrak{b}_i$ for every $i \geq 0$.

Now, we study the $F$-threshold of a filtration of ideals. To formalize this invariant, we adapt the classical notion of $F$-thresholds to the context of filtrations. 
\begin{definition}[{\cite{KK}}]
 Let $J\subseteq R$ be an ideal and $\fa_{\bullet}=\{\fa_i\}_{i \geq 0}$ be a filtration of ideals in $R$. For every $e \in \NN$, we define
 \begin{align*}
\nu_{\fa_{\bullet}}^{J}(p^e)=\sup\{t \in \NN\;|\; \fa_t \not\subseteq J^{[p^e]}\}.
 \end{align*}
We then define
 \begin{align*}
 \cC_{+}^{J}(\fa_\bullet) = \limsup_{e \to \infty} \frac{\nu^{J}_{\fa_{\bullet}}{(p^e)}}{p^{e}} \in \mathbb{R}_{\geq 0} \cup \{\pm \infty\} \;\text{and}\; 
 \cC_{-}^{J}(\fa_{\bullet}) = \liminf_{e \to \infty} \frac{\nu^{J}_{\fa_{\bullet}}{(p^e)}}{p^{e}} \in \mathbb{R}_{\geq 0} \cup \{\pm \infty\}.
 \end{align*}
 If $\cC_{+}^{J}(\fa_\bullet) = \cC_{-}^{J}(\fa_{\bullet}) \in \RR_{\geq 0}$, then we denote it by $ \cC^{J}(\fa_{\bullet})$ and call it the $F$-threshold of $\fa_{\bullet}$ with respect to $J$. When $(R,\m,K)$ is either  a local ring or a standard graded
$K$-algebra, we refer to $\cC^{\m}(\fa_{\bullet})$ as the $F$-threshold of $\fa_{\bullet}$.
\end{definition}

On the other hand, $\cC_{\pm}^{J}(\fa_{\bullet})$ refers to both the upper and lower limits of the sequence $\left\{\displaystyle \frac{\nu_{\mathfrak{a}_{\bullet}}^J\left(p^e\right)}{p^e}\right\}_{e \in \NN}$. Furthermore, $\mathcal{C}_{ \pm}^J\left(\mathfrak{a}_{\bullet}\right) \leq \mathcal{C}_{ \pm}^J\left(\mathfrak{b}_{\bullet}\right)$ indicates that both components satisfy the requirements
\begin{align*}
    \mathcal{C}_{ -}^J\left(\mathfrak{a}_{\bullet}\right) \leq \mathcal{C}_{ -}^J\left(\mathfrak{b}_{\bullet}\right) \; \text{and}\; \mathcal{C}_{+}^J\left(\mathfrak{a}_{\bullet}\right) \leq \mathcal{C}_{+}^J\left(\mathfrak{b}_{\bullet}\right).
\end{align*}

The following proposition describes the behavior of the $F$-threshold of a filtration of ideals when we have an $F$-pure ring.

\begin{proposition}[{\cite[Proposition $3.3$]{KK}}] \label{Prop: filtration-F-puro}
 Let $J\subseteq R$ be a nonzero proper ideal and $\mathfrak{a}_\bullet$ be a filtration in $R$. Let $e_0 \geq 0$ be a fixed integer. If $J^{\left[p^e\right]}=\left(J^{\left[p^e\right]}\right)^F$ for all $e \geq e_0$, then $p \nu_{\fa_{\bullet}}^J\left(p^e\right) \leq \nu_{\fa_{\bullet}}^J\left(p^{e+1}\right)$ for all $e \geq e_0$. Moreover,$$\mathcal{C}_{ \pm}^J\left(\mathfrak{a}_{\bullet}\right)=\lim _{e \rightarrow \infty} \frac{\nu_{\mathfrak{a}_{\bullet}}^J\left(p^e\right)}{p^e}=\sup _{e \geq e_0} \frac{\nu_{\mathfrak{a}_{\bullet}}^J\left(p^e\right)}{p^e}.
$$ In particular, when $R$ is $F$-pure, $$
\mathcal{C}_{ \pm}^J\left(\mathfrak{a}_{\bullet}\right)=\lim _{e \rightarrow \infty} \frac{\nu_{\mathfrak{a}_{\bullet}}^J\left(p^e\right)}{p^e}=\sup _{e \geq 0} \frac{\nu_{\mathfrak{a}_{\bullet}}^J\left(p^e\right)}{p^e}.
$$
\end{proposition}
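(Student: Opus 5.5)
The plan is to compare consecutive terms of the sequence $\nu^J_{\fa_\bullet}(p^e)$ directly. Raising a witness to the $p$-th power keeps it inside the filtration, using property (3) of a filtration, and keeps it outside the next Frobenius power, using the hypothesis that $J^{[p^e]}$ is Frobenius closed. Monotonicity of $\nu^J_{\fa_\bullet}(p^e)/p^e$ then gives both the existence of the limit and its identification with the supremum.

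\emph{Preliminary observation.} Since $J$ is proper, $J^{[p^e]}$ is proper for every $e$. Hence $\fa_0=R\not\subseteq J^{[p^e]}$, the set defining $\nu^J_{\fa_\bullet}(p^e)$ is nonempty, and $\nu^J_{\fa_\bullet}(p^e)\in \NN\cup\{\infty\}$.

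\emph{Main inequality.} Fix $e\ge e_0$ and any $t\in\NN$ with $\fa_t\not\subseteq J^{[p^e]}$, and choose $f\in \fa_t\setminus J^{[p^e]}$. By property (3) of a filtration, applied $p-1$ times, $f^p\in \fa_t^p\subseteq \fa_{pt}$. Suppose $f^p\in J^{[p^{e+1}]}=(J^{[p^e]})^{[p]}$. Then $f$ lies in the Frobenius closure $(J^{[p^e]})^F$, which equals $J^{[p^e]}$ by hypothesis, a contradiction. Thus $\fa_{pt}\not\subseteq J^{[p^{e+1}]}$, so $\nu^J_{\fa_\bullet}(p^{e+1})\ge pt$. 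Taking the supremum over all such $t$ gives $p\,\nu^J_{\fa_\bullet}(p^e)\le \nu^J_{\fa_\bullet}(p^{e+1})$. This includes the case where $\nu^J_{\fa_\bullet}(p^e)=\infty$, in which case the next term is infinite as well.

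\emph{Limit equals supremum.} Dividing the inequality by $p^{e+1}$ shows that $\{\nu^J_{\fa_\bullet}(p^e)/p^e\}_{e\ge e_0}$ is nondecreasing. A nondecreasing sequence in $\RR_{\ge0}\cup\{\infty\}$ converges to its supremum. Hence $\cC^J_+(\fa_\bullet)=\cC^J_-(\fa_\bullet)=\lim_{e}\nu^J_{\fa_\bullet}(p^e)/p^e=\sup_{e\ge e_0}\nu^J_{\fa_\bullet}(p^e)/p^e$.

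\emph{The $F$-pure case.} It suffices to check that every ideal $I$ is Frobenius closed, so that one may take $e_0=0$. If $f^{p^{e'}}\in I^{[p^{e'}]}$, then purity of $R\to F^{e'}_*R$, obtained by composing $F$-pure maps, gives $IF^{e'}_*R\cap R=I$, and hence $f\in I$.

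The only delicate points are bookkeeping. One must make sure $\nu$ is never $-\infty$, which is where properness of $J$ is used. One must also allow $\nu=\infty$, and use the correct definition of Frobenius closure, which permits an arbitrary exponent, in the $F$-pure case.
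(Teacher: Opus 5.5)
The paper gives no proof of this proposition and simply quotes it from \cite[Proposition 3.3]{KK}. Your argument is correct and is the standard one behind that result, the same as in the classical case of ideals: a witness $f\in\fa_t\setminus J^{[p^e]}$ gives $f^p\in\fa_{pt}\setminus J^{[p^{e+1}]}$ because $J^{[p^e]}$ is Frobenius closed. Hence $\nu^J_{\fa_\bullet}(p^e)/p^e$ is nondecreasing for $e\ge e_0$ and converges to its supremum, and $F$-purity makes every ideal Frobenius closed, so $e_0=0$ works. Your bookkeeping is also sound: properness of $J$ gives $\nu\ge 0$, and you allow $\nu=\infty$, which is exactly why the paper needs a separate upper bound to get existence.
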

We note that Proposition \ref{Prop: filtration-F-puro} does not guarantee the existence of $\mathcal{C}_{ \pm}^J\left(\mathfrak{a}_{\bullet}\right)$, since the limit of the sequence $\left\{\displaystyle \frac{\nu_{\mathfrak{a}_{\bullet}}^J\left(p^e\right)}{p^e}\right\}_{e \in \NN}$ may be equal to $+\infty$. 

The following theorem provides a comparison between the $F$-thresholds of two filtrations, assuming that one contains the other.
\begin{theorem}[{\cite[Theorem 3.5]{KK}}] \label{Theo: inequality-filtration}
Let $ \mathfrak{a}_\bullet = \{ \mathfrak{a}_i \}_{i \geq 1} $ and $ \mathfrak{b}_\bullet = \{ \mathfrak{b}_i \}_{i \geq 1} $ be two filtrations in \( R \). Let $J \subseteq R$ be a nonzero proper ideal. If $ \mathfrak{a}_\bullet \leq \mathfrak{b}_\bullet$, then $\mathcal{C}^{J}_{\pm}(\mathfrak{a}_\bullet) \leq \mathcal{C}^{J}_{\pm}(\mathfrak{b}_\bullet)$. 
\end{theorem}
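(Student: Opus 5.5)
The comparison will be proved directly from the definitions, by showing the pointwise inequality $\nu_{\fa_{\bullet}}^{J}(p^e)\leq \nu_{\mathfrak{b}_{\bullet}}^{J}(p^e)$ for every $e\in\NN$. Dividing by $p^e$ and taking $\limsup$ and $\liminf$ as $e\to\infty$ then gives the two required inequalities $\cC_{+}^{J}(\fa_\bullet)\leq \cC_{+}^{J}(\mathfrak{b}_\bullet)$ and $\cC_{-}^{J}(\fa_\bullet)\leq \cC_{-}^{J}(\mathfrak{b}_\bullet)$. This uses only that $\limsup$ and $\liminf$ are monotone for termwise inequalities of sequences in $\RR_{\geq 0}\cup\{+\infty\}$. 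It does not require either limit to exist or be finite.

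For the pointwise inequality, fix $e$ and consider the sets
\[
S_{\fa}=\{t\in\NN \mid \fa_t\not\subseteq J^{[p^e]}\}, \qquad S_{\mathfrak{b}}=\{t\in\NN \mid \mathfrak{b}_t\not\subseteq J^{[p^e]}\}.
\]
I claim $S_{\fa}\subseteq S_{\mathfrak{b}}$. Suppose $t\in S_{\fa}$, so there is some $f\in \fa_t\setminus J^{[p^e]}$. Since $\fa_t\subseteq \mathfrak{b}_t$, we have $f\in \mathfrak{b}_t$. Hence $\mathfrak{b}_t\not\subseteq J^{[p^e]}$, that is, $t\in S_{\mathfrak{b}}$. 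Taking suprema over nested sets gives $\sup S_{\fa}\leq \sup S_{\mathfrak{b}}$, which is exactly $\nu^{J}_{\fa_\bullet}(p^e)\leq\nu^{J}_{\mathfrak{b}_\bullet}(p^e)$.

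Two conventions need care in this step, and this is the only delicate point.

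\textbf{Empty sets.} If $S_{\fa}=\emptyset$, its supremum is taken to be $0$ or $-\infty$, according to the convention implicit in allowing values in $\RR_{\geq 0}\cup\{\pm\infty\}$. Under either convention it is still at most $\sup S_{\mathfrak{b}}$, so the inequality persists. Since $J$ is proper, $\fa_0=R\not\subseteq J^{[p^e]}$, so in fact $0\in S_{\fa}$ and both sets are nonempty.

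\textbf{Infinite suprema.} If $S_{\mathfrak{b}}$ is unbounded, then $\nu^{J}_{\mathfrak{b}_\bullet}(p^e)=+\infty$ and the inequality is trivial. The statement indexes the filtrations from $i\geq 1$, but the same argument applies verbatim with either indexing.

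Finally, the monotonicity of $\limsup$ and $\liminf$ in the extended reals yields $\cC^{J}_{\pm}(\fa_\bullet)\leq \cC^{J}_{\pm}(\mathfrak{b}_\bullet)$. If some terms are $+\infty$, the corresponding upper or lower limit for $\mathfrak{b}_\bullet$ is at least as large, so the conclusion holds.
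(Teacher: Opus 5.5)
Your argument is correct: $\fa_t\subseteq\mathfrak{b}_t$ gives $\{t \mid \fa_t\not\subseteq J^{[p^e]}\}\subseteq\{t \mid \mathfrak{b}_t\not\subseteq J^{[p^e]}\}$, so $\nu^{J}_{\fa_\bullet}(p^e)\leq\nu^{J}_{\mathfrak{b}_\bullet}(p^e)$ for every $e$, and monotonicity of $\limsup$ and $\liminf$ in the extended reals then gives both inequalities. The paper gives no proof of its own and only cites \cite[Theorem 3.5]{KK}; your pointwise comparison is the standard argument for this statement, and your handling of empty sets, infinite suprema and the indexing from $1$ covers the remaining edge cases correctly.
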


\begin{remark}\label{remark-inequality}
Let $R$ be an $F$-pure ring. Let $J \subseteq R$ be a nonzero proper ideal. Let $ \mathfrak{a}_\bullet = \{ \mathfrak{a}_i \}_{i \geq 1}$ and $ \mathfrak{b}_\bullet = \{ \mathfrak{b}_i \}_{i \geq 1}$ be two filtrations in $R$ such that  $ \mathfrak{a}_\bullet \leq \mathfrak{b}_\bullet$. By Proposition \ref{Prop: filtration-F-puro} and Theorem \ref{Theo: inequality-filtration}, we have
$$\cC_{\pm}^{J}(\fa_{\bullet}) = \sup_{e \geq 0} \frac{\nu_{\fa_{\bullet}}^{J}(p^{e})}{p^{e}} \leq \sup_{e \geq 0} \frac{\nu_{\mathfrak{b}_{\bullet}}^{J}(p^{e})}{p^{e}} = \cC_{\pm}^{J}(\mathfrak{b}_{\bullet}).$$ 
\end{remark}
This inequality guarantees the existence of $\cC^{J}(\fa_{\bullet})$, provided that $\cC^{J}(\mathfrak{b}_{\bullet})$ exists.

\section{Existence of $F$-Thresholds of Differential Power Filtrations in the Monomial Case}
 Let $R$ be a finitely generated $K$-algebra. We define the $K$-linear differential operators of order less than or equal to $n$ in $R$ by
\begin{enumerate}
\item $D^0_R=R=\End_R(R) \subseteq \End_K(R)$,
\item $D^{n}_R=\{\delta \in \End_K(R) \;|\;\delta f-f \delta \in D^{n-1}_R\; $for every$ \;f \in R\}$ for every $n \geq 1$.
\end{enumerate}
The ring of $K$-linear differential operators is defined by $$D_R=\bigcup_{n \in \NN}D^n_R.$$ 

Suppose that $R$ is either the polynomial ring $K[x_1,\ldots, x_d]$ or the formal power series ring $K[[x_1, \ldots, x_d]]$. The ring of $K$-linear differential operators is $$D_R=R\left\langle \frac{1}{j!}\frac{\partial^j}{\partial x_i^j} \; \bigg| \; i=1,\ldots,d; \;j\in \NN \right\rangle,$$ that is, the free $R$-module generated by the differential operators $\displaystyle  \frac{1}{j_1!}\frac{\partial^{j_1}}{\partial x_1^{j_1}} \cdots  \frac{1}{j_d!}\frac{\partial^{j_d}}{\partial x_d^{j_d}}$ \cite{G67}. In this case $$D_R^n=R\left\langle \frac{1}{\alpha_1!}\frac{\partial^{\alpha_1}}{\partial x_1^{\alpha_1}} \cdots \frac{1}{\alpha_d!}\frac{\partial^{\alpha_d}}{\partial x_d^{\alpha_d}}\; \bigg| \;  \alpha_1+\ldots +\alpha_d \leq n \right\rangle.$$
 
\begin{definition}[\cite{DDSGHNB2018}]\label{Def:differential-power}
Let $R$ be a finitely generated $K$-algebra. Let $I \subseteq R$ be an ideal, and let $n$ be a
positive integer. We define the $n$-th $K$-linear differential power of $I$ by $$I^{\langle n\rangle}=\{f \in R\;|\; \delta(f) \in I\; \text{for all}\; \delta \in D_{R}^{n-1}\}.$$
\end{definition}
For $n>0$, the differential power $I^{\langle n\rangle}$ is an ideal in $R$ \cite{DDSGHNB2018}. By convention, $I^{\langle 0\rangle}=R$. We can note that $I^{\langle n+1\rangle} \subseteq I^{\langle n\rangle}$ since $D_R^{n-1} \subseteq D_R^{n}$. 

For the inclusion $I^{\langle n\rangle} I^{\langle m\rangle} \subseteq I^{\langle n+m\rangle}$, Kenkel, McPherson, Page, Smolkin, Stephenson, and Yang provide examples of families of ideals in the polynomial ring over a field of characteristic zero that satisfy this property \cite[Lemma 2.9]{KMPSSY2021}. Moreover, for $\ZZ$-linear differential operators, De Stefani, Grifo, and Jeffries show that the differential powers of prime ideals in the polynomial ring coincide with their symbolic powers, expanding the examples where the inclusion we are discussing holds \cite[Theorem 3.3]{DSGJ2025}. On the other hand, this inclusion fails in general  \cite[Example 4.35]{BJNB2019}. However, it is natural to ask where such an inclusion might be realized, for instance, in normal domains \cite[Question 4.36]{BJNB2019}. This discussion motivates the following definition.

\begin{definition}\label{Def:differential-filtration}
Let $R$ be a finitely generated $K$-algebra. Let $I \subseteq R$ be a nonzero ideal. The sequence of ideals $\{I^{\langle i \rangle }\}_{i \geq 0}$ is called an $I$-differential power filtration if $I^{\langle i \rangle }I^{\langle j \rangle } \subseteq I^{\langle i+j \rangle }$ for every $i,j \geq 0$. In this case, we denote the sequence $\{I^{\langle i \rangle }\}_{i \geq 0}$ by $I^{\langle \bullet \rangle }$.
\end{definition}

The following proposition tells us that, in the polynomial ring, we always have differential power filtrations of ideals.

\begin{proposition}\label{Prop:differential-filtration-polinomial}
Let $R=K[x_{1},\ldots ,x_{d}]$ be a polynomial ring where $K$ is an arbitrary field, and let $I \subseteq R$ be an ideal. Then $I^{\langle n \rangle }I^{\langle m \rangle } \subseteq I^{\langle n+m \rangle }$ for every $n,m \geq 0$.   
\end{proposition}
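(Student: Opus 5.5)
We will prove the inclusion directly from the explicit description of $D_R^{n}$ for the polynomial ring and a Leibniz rule for the divided-power operators. Write $\partial^{[\alpha]}=\frac{1}{\alpha_1!}\frac{\partial^{\alpha_1}}{\partial x_1^{\alpha_1}}\cdots\frac{1}{\alpha_d!}\frac{\partial^{\alpha_d}}{\partial x_d^{\alpha_d}}$ for $\alpha\in\NN^d$. These operators are defined over any field $K$ by $\partial^{[\alpha]}(x^\beta)=\binom{\beta}{\alpha}x^{\beta-\alpha}$, with $\binom{\beta}{\alpha}=\prod_i\binom{\beta_i}{\alpha_i}$. Since $D_R^{n-1}$ is the free $R$-module generated by the $\partial^{[\alpha]}$ with $|\alpha|\le n-1$, and $I$ is an ideal, we get
$$f\in I^{\langle n\rangle}\iff \partial^{[\alpha]}(f)\in I \text{ for all } |\alpha|\le n-1.$$

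The cases $n=0$ or $m=0$ are trivial because $I^{\langle 0\rangle}=R$ and the differential powers are ideals. So assume $n,m\ge 1$. The key tool is the Leibniz formula
$$\partial^{[\gamma]}(fg)=\sum_{\alpha+\beta=\gamma}\partial^{[\alpha]}(f)\,\partial^{[\beta]}(g).$$
We will check it on monomials, where it reduces to the Vandermonde identity $\binom{a+b}{c}=\sum_{i+j=c}\binom{a}{i}\binom{b}{j}$ in each variable. This identity holds over $\ZZ$, hence in $K$. Bilinearity then extends it to arbitrary $f$ and $g$. We expect this step to be the only real subtlety, because it is where arbitrary characteristic enters, and the divided powers make the identity characteristic-free.

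Now take $f\in I^{\langle n\rangle}$, $g\in I^{\langle m\rangle}$, and $|\gamma|\le n+m-1$. For each decomposition $\gamma=\alpha+\beta$ we have $|\alpha|+|\beta|\le n+m-1$, so either $|\alpha|\le n-1$ or $|\beta|\le m-1$. In the first case $\partial^{[\alpha]}(f)\in I$, and in the second $\partial^{[\beta]}(g)\in I$. Either way each summand $\partial^{[\alpha]}(f)\partial^{[\beta]}(g)$ lies in $I$, so $\partial^{[\gamma]}(fg)\in I$.

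Hence $fg\in I^{\langle n+m\rangle}$. Since $I^{\langle n\rangle}I^{\langle m\rangle}$ is generated by such products $fg$ and $I^{\langle n+m\rangle}$ is an ideal, the inclusion $I^{\langle n\rangle}I^{\langle m\rangle}\subseteq I^{\langle n+m\rangle}$ follows.
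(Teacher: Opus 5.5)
Your proof is correct and follows essentially the same route as the paper. Both arguments use the Leibniz rule $\partial^{[\gamma]}(fg)=\sum_{\alpha+\beta=\gamma}\partial^{[\alpha]}(f)\,\partial^{[\beta]}(g)$ together with the observation that $|\alpha|+|\beta|\le n+m-1$ forces $|\alpha|\le n-1$ or $|\beta|\le m-1$. The only difference is that you verify the Leibniz rule yourself on monomials via the Vandermonde identity, whereas the paper cites the fact that these operators form a Hasse--Schmidt derivation.
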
 
\begin{proof} 
In order to prove this proposition, for each $\gamma  \in \NN^d$ we denote $|\gamma|=\gamma_1 + \ldots + \gamma_d$, and $$\delta_{\gamma}=\displaystyle \frac{1}{\gamma_{1}!} \frac{\partial^{\gamma_1}}{\partial x_1^{\gamma_1}}
\cdots
\frac{1}{\gamma_{d}!} \frac{\partial^{\gamma_d}}{\partial x_d^{\gamma_d}}.$$ 

Let $s \in I^{\langle n \rangle }$ and $t \in I^{\langle m \rangle }$. We want to show that $st \in I^{\langle n+m \rangle }$. Since $\displaystyle D_R= \bigoplus_{\gamma \in \NN^d}R  \cdot \delta_\gamma$, it suffices to verify that $\delta_{\alpha}(st) \in I$ for every $\alpha \in \NN^d$ such that $|\alpha|\leq n+m-1$. Independently of the characteristic of the field, the family of operators $\{\delta_{\gamma}\}_{\gamma \in \NN^d}$ is a Hasse?Schmidt derivation on $R$ (see \cite[Example 2.2.2]{Traves1998}). Then the operator $\delta_\alpha$ satisfies the Leibniz rule $$\delta_{\alpha}(st)=\sum_{\beta+\omega=\alpha} \delta_{\beta}(s)\delta_{\omega}(t).$$

If $|\beta|\leq n-1$, then $\delta_{\beta}(s) \in I$, and so $\delta_{\beta}(s)\delta_{\omega}(t) \in I$. If instead $|\beta|>n-1$, then $$n-1<|\beta|=|\alpha-\omega|=|\alpha|-|\omega|\leq n+m-1-|\omega|,$$ which implies $|\omega|\leq m-1$. As a consequence, $\delta_{\omega}(t) \in I$, and thus again $\delta_{\beta}(s)\delta_{\omega}(t) \in I$.

From the above it follows that $\delta_{\alpha}(st) \in I$.
\end{proof}

Let $R=K[x_{1},\ldots ,x_{d}]$ be a polynomial ring where $K$ is an arbitrary field, and let $I \subseteq R$ be an ideal. Then the previous proposition tells us that $I^{\langle \bullet \rangle}=\{I^{\langle i \rangle}\}_{i\geq 0}$ is an $I$-differential power filtration.

Throughout the remainder of this article we shall take 
$R$ to be the polynomial ring 
$K[x_{1},\ldots ,x_{d}]$ with $K$ a field of prime characteristic $p$. We focus on studying the $F$-threshold of an $I$-differential power filtration when $I$ is a monomial ideal in $R$.

\begin{notation}
Let $I \subseteq R$ be an ideal. We denote the minimal number of generators of $I$ by $\mu(I)$, and we denote the height of $I$ by $\operatorname{ht}(I)$.    
\end{notation}

\begin{lemma}\label{lemma:monomial-prime}
Let $\fp \subseteq R$ be a monomial prime ideal. Then the following statements hold.
\begin{enumerate}
\item For every $n \in\NN$, $\fp^{\langle n \rangle } = \fp^{n}$.
\item Let $N \in \NN$ such that $N \geq \mu(\fp)$. Then $\fp^{\langle Np^{e}\rangle } \subseteq \fp^{[p^{e}]}$ for every $e \in \NN$.
\end{enumerate}
\end{lemma}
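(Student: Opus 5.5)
A monomial prime is generated by a subset of the variables. After renaming the variables I will assume $\fp=(x_1,\ldots,x_h)$ with $h=\mu(\fp)=\operatorname{ht}(\fp)$. The plan is to describe $\fp^{\langle n\rangle}$ explicitly with the divided-power operators $\delta_\gamma$ from the proof of Proposition \ref{Prop:differential-filtration-polinomial}, and then derive (2) from (1) by a pigeonhole argument on exponents.

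For (1), the inclusion $\fp^n\subseteq \fp^{\langle n\rangle}$ follows from Proposition \ref{Prop:differential-filtration-polinomial} by induction once we know $\fp\subseteq\fp^{\langle 1\rangle}$, which holds because $D^0_R=R$.

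For the reverse inclusion, let $f\in\fp^{\langle n\rangle}$ and write $f=\sum_{\beta} c_\beta(x_{h+1},\ldots,x_d)\,x_1^{\beta_1}\cdots x_h^{\beta_h}$, grouping by the monomial in the first $h$ variables. We must show $c_\beta=0$ whenever $|\beta|\le n-1$. Suppose not, and choose $\beta$ with $c_\beta\neq 0$ and $|\beta|$ minimal, so that $|\beta|\le n-1$. Apply $\delta_{(\beta,0)}$, which only differentiates in $x_1,\ldots,x_h$. On a monomial $x^{\beta'}$ with $\beta'\ge\beta$ it gives $\binom{\beta'}{\beta}x^{\beta'-\beta}$. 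Every term with $\beta'\neq\beta$ is either killed or keeps a positive power of some $x_i$ with $i\le h$, so it lies in $\fp$. The term with $\beta'=\beta$ contributes exactly $c_\beta$, since $\binom{\beta}{\beta}=1$. This is the point where divided powers avoid characteristic issues.

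Because $|\beta|\le n-1$, we have $\delta_{(\beta,0)}(f)\in\fp$, and therefore $c_\beta\in\fp$. But $c_\beta$ is a nonzero polynomial in $x_{h+1},\ldots,x_d$ only, so it cannot lie in $\fp$ (set $x_1=\cdots=x_h=0$). This contradiction proves $\fp^{\langle n\rangle}=\fp^n$. The main obstacle is this bookkeeping: showing that the lower terms survive with coefficient exactly $1$ and that the other terms stay in $\fp$. Using divided powers instead of ordinary derivatives handles it cleanly.

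For (2), by (1) it suffices to show $\fp^{Np^e}\subseteq\fp^{[p^e]}=(x_1^{p^e},\ldots,x_h^{p^e})$. It is enough to check this on the monomial generators $x_1^{a_1}\cdots x_h^{a_h}$ with $\sum a_i=Np^e$. If every $a_i\le p^e-1$, then $\sum a_i\le h(p^e-1)<Np^e$, since $N\ge h$. This is a contradiction, so some $a_i\ge p^e$ and the generator lies in $\fp^{[p^e]}$.
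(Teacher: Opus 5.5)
Your proposal is correct and follows essentially the same route as the paper. For $\fp^{\langle n\rangle}\subseteq\fp^n$, you apply the divided-power operator attached to the $\fp$-part of a term of low $\fp$-degree and check that the result does not lie in $\fp$. You then deduce (2) from (1) via the pigeonhole containment $\fp^{Np^e}\subseteq\fp^{[p^e]}$.

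The differences are cosmetic. You obtain $\fp^n\subseteq\fp^{\langle n\rangle}$ from Proposition~\ref{Prop:differential-filtration-polinomial} by induction, whereas the paper quotes this containment from the literature. Grouping $f$ by coefficients $c_\beta\in K[x_{h+1},\ldots,x_d]$ also makes the non-cancellation step more transparent than the paper's wording.
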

\begin{proof}
Given that $\fp$ is a monomial prime ideal, we have $\fp=(x_{j_1},\ldots, x_{j_m})$, and we set $S=\{j_1,\ldots,j_m\}$.

For the first part of the lemma, we observe that by \cite[Proposition 2.5]{DDSGHNB2018}, it suffices to show that $\fp^{\langle n \rangle } \subseteq \fp^{n}$. Let $f \not \in \fp^{n}$. Hence $f$ has a term of the form $\lambda x_1^{\alpha_1}\cdots x_d^{\alpha_d}$ with $\lambda$ a nonzero element of $K$ and $\displaystyle \sum_{i\in S} \alpha_i \leq n-1$. Consider the differential operator $\delta=\displaystyle \frac{1}{\omega_{1}!} \frac{\partial^{\omega_1}}{\partial x_1^{\omega_1}}
\cdots
\frac{1}{\omega_{d}!} \frac{\partial^{\omega_d}}{\partial x_d^{\omega_d}}$, where $\omega_i=\alpha_i$ whenever $i \in S$ and $\omega_i=0$ otherwise.  Applying $\delta$ to $f$, it maps $\lambda x_1^{\alpha_1}\cdots x_d^{\alpha_d}$ to $\lambda \displaystyle\prod_{i \not \in S}x_i^{\alpha_i}$, and any
other term appearing in $f$ to terms that differ from the additive inverse of $\lambda \displaystyle\prod_{i \not \in S}x_i^{\alpha_i}$. In this way, we obtain $\delta(f) \not \in \fp$. Moreover, we note that $\delta \in D_R^{n-1}$. As a consequence, $f \not \in \fp^{\langle n \rangle }$.  

The second part is a consequence of Part (1), in view of the inclusions
 $$\fp^{Np^{e}} \subseteq \fp^{\mu(\fp)p^{e}} \subseteq \fp^{[p^{e}]}.$$
\end{proof}

We are now prepared to prove the main result of this manuscript, which appears in the introduction as Theorem \ref{Main-theo}. This theorem establishes the existence of the $F$-threshold of an $I$-differential power filtration for the monomial case and gives us an explicit upper bound expressed via the minimal primes of $I$.

\begin{theorem}\label{theo:F-threshold of filtrations of differential powers}

Let $I \subseteq J \subseteq R$ be two nonzero proper ideals, with $I$ monomial and $J$ radical. Then $\cC^{J}(I^{\langle \bullet \rangle})$ exists and $\cC^{J}(I^{\langle \bullet \rangle}) \leq \max\{\operatorname{ht}(\fp)\;|\;\fp \in \Min(I)\}$. 
\end{theorem}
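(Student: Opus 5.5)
We argue by comparison with filtrations of powers of monomial primes, using the $F$-purity of $R$ and the containment criterion in Lemma \ref{lemma:monomial-prime}. Since $R=K[x_1,\ldots,x_d]$ is $F$-pure, Proposition \ref{Prop: filtration-F-puro} shows that the limit $\lim_e \nu^{J}_{I^{\langle\bullet\rangle}}(p^e)/p^e$ exists in $\RR_{\geq 0}\cup\{+\infty\}$ and equals the supremum over $e\geq 0$. Moreover $I^{\langle\bullet\rangle}$ is a filtration by Proposition \ref{Prop:differential-filtration-polinomial}. Hence existence reduces to proving the finite upper bound. Set $h=\max\{\operatorname{ht}(\fp)\mid \fp\in\Min(I)\}$.

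\textbf{Step 1: reduce to a containment in $J^{[p^e]}$.} Since $I$ is monomial, $I$ has an irredundant primary decomposition into monomial primary ideals. Its minimal primes are monomial primes $\fp_1,\ldots,\fp_r$ generated by variables, with $\mu(\fp_i)=\operatorname{ht}(\fp_i)\leq h$. The associated primes of $I$ may include embedded ones, so I plan to work with $\sqrt I=\fp_1\cap\cdots\cap\fp_r$ rather than with $I$ itself. Differential powers are monotone in the ideal, so $I\subseteq\fp_i$ gives $I^{\langle n\rangle}\subseteq \fp_i^{\langle n\rangle}=\fp_i^{n}$ by Lemma \ref{lemma:monomial-prime}(1). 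Therefore
\[ I^{\langle n\rangle}\subseteq \bigcap_{i=1}^r \fp_i^{n}. \]
It remains to show that $\bigcap_i \fp_i^{n}\subseteq J^{[p^e]}$ once $n\geq h(p^e-1)+1$, or at least once $n\geq hp^e$.

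\textbf{Step 2 (main obstacle): pass from the monomial primes to the radical ideal $J$.} The intersection $\bigcap_i\fp_i^{hp^e}$ is monomial, and Lemma \ref{lemma:monomial-prime}(2) gives $\fp_i^{hp^e}\subseteq \fp_i^{[p^e]}$. Intersections of Frobenius powers of monomial ideals commute with Frobenius in a polynomial ring, because Frobenius is flat, so
\[ \bigcap_i \fp_i^{[p^e]}=\Big(\bigcap_i\fp_i\Big)^{[p^e]}=(\sqrt I)^{[p^e]}. \]
A monomial argument also works directly: a monomial in every $\fp_i^{[p^e]}$ has, for each $i$, some variable of $\fp_i$ raised to at least $p^e$, so it is divisible by $u^{p^e}$ for a squarefree monomial $u$ meeting every $\fp_i$, that is, $u\in\sqrt I$. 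Since $J$ is radical and contains $I$, we have $\sqrt I\subseteq J$, and hence $(\sqrt I)^{[p^e]}\subseteq J^{[p^e]}$. Combining,
\[ I^{\langle hp^e\rangle}\subseteq \bigcap_i \fp_i^{hp^e}\subseteq J^{[p^e]}. \]
This is where the hypotheses that $J$ is radical and $I$ is monomial are both used. The delicate point is the equality of $\bigcap\fp_i^{[p^e]}$ with $(\bigcap\fp_i)^{[p^e]}$; I would justify it by the monomial divisibility argument above rather than by citing flatness.

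\textbf{Step 3: conclude.} By Step 2, $\nu^{J}_{I^{\langle\bullet\rangle}}(p^e)\leq hp^e-1$ for every $e$, using that the filtration is decreasing. Thus $\sup_e \nu^{J}_{I^{\langle\bullet\rangle}}(p^e)/p^e\leq h$, so the limit is finite and $\cC^{J}(I^{\langle\bullet\rangle})$ exists with $\cC^{J}(I^{\langle\bullet\rangle})\leq h$. Nonnegativity is automatic: $I^{\langle 0\rangle}=R\not\subseteq J^{[p^e]}$ because $J$ is proper.
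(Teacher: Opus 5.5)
Your proposal is correct and follows essentially the paper's argument: you bound $I^{\langle hp^e\rangle}$ inside $\bigcap_i \fp_i^{[p^e]} = (\sqrt{I})^{[p^e]} \subseteq J^{[p^e]}$ using Lemma \ref{lemma:monomial-prime}, then use $F$-purity through Proposition \ref{Prop: filtration-F-puro} to get existence of the limit. The differences are cosmetic: you use the monotonicity $I^{\langle n\rangle}\subseteq \fp_i^{\langle n\rangle}$ directly instead of first reducing to radical $I$ via Remark \ref{remark-inequality}, and you justify the Frobenius-intersection step, which the paper leaves implicit.
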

\begin{proof}
Since $I \subseteq \sqrt{I}$, we have $I^{\langle \bullet \rangle} \leq \sqrt{I}^{\langle \bullet \rangle}$. By Remark \ref{remark-inequality}, we may assume, without loss of generality, that $I$ is a radical ideal. Let $\fp_1,\ldots, \fp_{\ell}$ be the minimal prime ideals of $I$. We take $N=\max\{\mu(\fp)\;|\;\fp \in \Min(I)\}$. By Lemma \ref{lemma:monomial-prime} (2), we have 
\begin{align*}
I^{\langle Np^{e} \rangle}&=\left(\fp_1 \cap \ldots \cap \fp_{\ell}\right)^{\langle Np^{e} \rangle} \\
&=\fp_1^{\langle Np^{e} \rangle}\cap \ldots \cap \fp_{\ell}^{\langle Np^{e} \rangle} \\
& \subseteq \fp_1^{[p^{e}]}\cap \ldots \cap \fp_{\ell}^{[ p^{e}]}\\
& \subseteq \left(\fp_1\cap \ldots \cap \fp_{\ell} \right)^{[ p^{e}]}\\
&= I^{[p^{e}]} \subseteq J^{[p^{e}]},
\end{align*}
 for every $e \in \NN$. As a consequence, for every $e \in \NN$ $$\frac{\nu^{J}_{I^{\langle \bullet \rangle}}{(p^e)}}{p^{e}} \leq \frac{Np^{e}-1}{p^{e}}.$$ Now, applying lim sup and lim inf we get that $0 \leq \mathcal{C}_{\pm}^{J}\left(I^{\langle \bullet \rangle}\right) \leq N$. By Proposition \ref{Prop: filtration-F-puro}, $$\mathcal{C}_{ \pm}^{J}\left(I^{\langle \bullet \rangle}\right)=\lim _{e \rightarrow \infty} \frac{\nu_{I^{\langle \bullet \rangle}}^{J}\left(p^e\right)}{p^e}=\sup _{e \geq e_0} \frac{\nu_{I^{\langle \bullet \rangle}}^{J}\left(p^e\right)}{p^e}.$$ Since $\left\{\displaystyle \frac{\nu_{I^{\langle \bullet \rangle}}^{J}\left(p^e\right)}{p^e}\right\}_{e \in \NN}$ is upper bounded by $N$, $\mathcal{C}^{J}\left(I^{\langle \bullet \rangle}\right)$ exists and $$\mathcal{C}^{J}\left(I^{\langle \bullet \rangle}\right)  \leq N = \max\{\operatorname{ht}(\fp)\;|\;\fp \in \Min(I)\}.$$
\end{proof}

We end this section with a differential power filtration, which $F$-threshold coincides with the dimension of $R$.
\begin{corollary} \label{Coro:differential-filtration}
Let $\m \subseteq R$ be the ideal generated by $x_1, \ldots, x_d$. Then $\cC^{\m}(\m^{\langle \bullet \rangle})=d$. 
\end{corollary}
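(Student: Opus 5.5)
The plan is to squeeze $\cC^{\m}(\m^{\langle \bullet \rangle})$ between $d$ and $d$. For the upper bound I apply Theorem \ref{theo:F-threshold of filtrations of differential powers} with $I=J=\m$. This is legitimate because $\m$ is a nonzero proper monomial ideal and it is radical. Its only minimal prime is $\m$ itself, which has height $d$. The theorem then gives that $\cC^{\m}(\m^{\langle \bullet \rangle})$ exists and is at most $d$.

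For the lower bound I compute $\nu^{\m}_{\m^{\langle \bullet \rangle}}(p^e)$ explicitly. By Lemma \ref{lemma:monomial-prime} (1), applied to the monomial prime $\m$, we have $\m^{\langle t \rangle}=\m^t$ for every $t$. So $\nu^{\m}_{\m^{\langle \bullet \rangle}}(p^e)$ is the largest $t$ with $\m^t\not\subseteq \m^{[p^e]}=(x_1^{p^e},\ldots,x_d^{p^e})$. The monomial
$$x_1^{p^e-1}\cdots x_d^{p^e-1}$$
has degree $d(p^e-1)$, so it lies in $\m^{d(p^e-1)}$. Since $\m^{[p^e]}$ is a monomial ideal and every exponent of this monomial is below $p^e$, it does not lie in $\m^{[p^e]}$. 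Hence $\nu^{\m}_{\m^{\langle \bullet \rangle}}(p^e)\geq d(p^e-1)$.

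In fact equality holds: any monomial of degree $d(p^e-1)+1$ has some exponent at least $p^e$ by pigeonhole, so it lies in $\m^{[p^e]}$. Therefore
$$\frac{\nu^{\m}_{\m^{\langle \bullet \rangle}}(p^e)}{p^e}=\frac{d(p^e-1)}{p^e},$$
and this tends to $d$ as $e\to\infty$. This already gives existence and the value $d$ directly, without even needing the upper bound from the theorem.

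There is no real obstacle here. The only point to check is that Lemma \ref{lemma:monomial-prime} (1) applies to $\m$, which it does since $\m$ is a monomial prime, and that membership in the monomial ideal $\m^{[p^e]}$ can be tested monomial by monomial.
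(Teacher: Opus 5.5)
Your proposal is correct and matches the paper's proof: the upper bound comes from Theorem \ref{theo:F-threshold of filtrations of differential powers}, and the lower bound comes from Lemma \ref{lemma:monomial-prime}(1) together with the witness monomial $(x_1\cdots x_d)^{p^e-1}\in \m^{d(p^e-1)}\setminus \m^{[p^e]}$. Your extra pigeonhole observation that $\m^{d(p^e-1)+1}\subseteq \m^{[p^e]}$ gives $\nu^{\m}_{\m^{\langle \bullet \rangle}}(p^e)=d(p^e-1)$ exactly, which settles both existence and the value without appealing to the theorem.
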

\begin{proof}
By virtue of Theorem \ref{theo:F-threshold of filtrations of differential powers}, it suffices to show that $d \leq \cC^{\m}(\m^{\langle \bullet \rangle})$. We take $s=d(p^{e}-1)$.  We have $(x_1 \cdots x_d)^{p^{e}-1} \in \m^s$, but $(x_1 \cdots x_d)^{p^{e}-1} \not \in \m^{[p^{e}]}$. By Lemma \ref{lemma:monomial-prime} (1), $\m^{\langle s \rangle} \not \subseteq \m^{[p^{e}]}$. Hence, for every $e \in \NN$ $$\frac{d(p^{e}-1)}{p^{e}} \leq \frac{\nu^{\m}_{\m^{\langle \bullet \rangle}}{(p^e)}}{p^{e}},$$ which take us to $d \leq \cC^{\m}(\m^{\langle \bullet \rangle})$ by  taking the limit as $e \rightarrow \infty$.
\end{proof}

\section*{Acknowledgments}
I thank Luis Núñez-Betancourt for helpful conversations and comments. I also thank the anonymous referee for useful comments and suggestions.

\bibliographystyle{alpha}
\newcommand{\etalchar}[1]{$^{#1}$}

\end{document}